\newtheorem{theorem}{\sc Theorem}[section]
\newtheorem{proposition}[theorem]{\sc Proposition}
\newtheorem{lemma}[theorem]{\sc Lemma}
\newtheorem{corollary}[theorem]{\sc Corollary}
\theoremstyle{definition}
\newtheorem{definition}[theorem]{\sc Definition}
\theoremstyle{remark}
\newtheorem{remark}[theorem]{\sc Remark}
\newtheorem{claim}[theorem]{}
\begin{document}
\title{Separable $K$-Linear Categories}
\author{Andrei Chite\c{s} }
\thanks{The first author was financially supported by CNCSIS (Contract
560/2009, CNCSIS code ID\_69).}
\address{University of Bucharest, Faculty of Mathematics and Informatics,
The Research Group in Geometry, Topology and Algebra, Str. Academiei 14,
Ro-70109, Bucharest, Romania}
\email{andrei.chites@pointlogistix.ro}
\author{Costel Chite\c{s} }
\address{Tudor Vianu High School, Str. Arhitect Ion Mincu 10, Bucharest,
Romania}
\email{costelchites@yahoo.com}
\date{}
\subjclass[2000]{Primary 18G60; Secondary 16D90.}

\begin{abstract}
We define and investigate separable $K$-linear categories. We show that such
a category $\mathcal{C}$ is locally finite and that every left $\mathcal{C}$%
-module is projective. We apply our main results to characterize separable
linear categories that are spanned by groupoids or delta categories.
\end{abstract}

\keywords{$K$-linear category, Hochschild-Mitchell cohomology, separable $K$%
-linear category}
\maketitle

\section*{Introduction}

Linear categories are important generalizations of ordinary associative
algebras, that play an important role in various fields of mathematics, such
as representation theory of finite dimensional algebras, mathematical
physics, etc. They were introduced and studied in \cite{M1}, while in \cite%
{M2} several homological tools were adapted to this more general framework.
In particular, in loc. cit. Hochschild-Mitchell cohomology was defined as a
substitute of Hochschild cohomology, which is a key homological invariant of
unital associative algebras.

The aim of this short note is to investigate the basic properties of the
simplest linear categories from a cohomological point of view. More
precisely, we give equivalent characterizations of those linear categories
with the property that their Hochschild-Mitchell cohomology groups vanish in
positive degrees, see Theorem \ref{pr:caracterizare}. It is worthwhile to
remark that for associative algebras a similar result can be found in \cite%
{We}, and in \cite{St} in the more general case of algebras in an abelian
monoidal category. In analogy to the case of associative algebras, we call
these linear categories separable. We also show that a separable linear
category $\mathcal{C}$ is locally finite, i.e. $\dim _{K}\mathrm{Hom}_{%
\mathcal{C}}(x,y)<\infty ,$ for any objects $x$ and $y$ in $\mathcal{C}.$
The later result may be seen as a generalization of Zelinsky Theorem. As
applications of our main results we give necessary and sufficient conditions
such that the $K$-linear categories spanned by groupoids and delta
categories to be separable.

\section{Preliminaries}

Throughout this paper $\mathcal{C}$ denotes a small category. The set of
objects in $\mathcal{C}$ is denoted by $\mathcal{C}_{0}$ and, for
simplicity, we write $\mathcal{C}(x,y)$ for $\mathrm{Hom}_{\mathcal{C}}(x,y)$%
.

\begin{claim}
$K$\textbf{-linear categories.} Let $K$ be a field. A category $\mathcal{C}$
is said to be $K$-linear if $\mathcal{C}(x,y)$ is a $K$-vector space, for
any $x,y\in \mathcal{C}_{0},$ and the composition maps in $\mathcal{C}$ are
bilinear. Note that the composition in $\mathcal{C}$ can be seen as linear
maps%
\begin{equation*}
\mathcal{C}(y,z)\otimes \mathcal{C}(x,y)\rightarrow \mathcal{C}(x,z),\qquad
g\otimes f\mapsto g\circ f.
\end{equation*}%
For the properties of linear categories the reader is refered to \cite{HS,
M2} and the references therein.

Let $\mathcal{C}$ and $\mathcal{D}$ be two $K$-linear categories. A functor $%
F:\mathcal{C}\rightarrow \mathcal{D}$ is said to be $K$-linear if $F(-):%
\mathcal{C}(x,y)\rightarrow \mathcal{D}(F(x),F(y))$ is a $K$-linear map, for
all $x,y\in \mathcal{C}_{0}.$
\end{claim}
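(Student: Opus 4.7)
Most of the statement in this block is purely definitional, so the only part that requires argument is the observation that ``the composition in $\mathcal{C}$ can be seen as linear maps $\mathcal{C}(y,z)\otimes\mathcal{C}(x,y)\rightarrow\mathcal{C}(x,z)$.'' My plan is to prove this by appealing directly to the universal property of the tensor product of $K$-vector spaces; no category-theoretic machinery beyond bilinearity of composition is needed.

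First I would fix a triple $x,y,z\in\mathcal{C}_{0}$ and record that, by hypothesis, the assignment
\begin{equation*}
\circ:\mathcal{C}(y,z)\times\mathcal{C}(x,y)\longrightarrow\mathcal{C}(x,z),\qquad (g,f)\longmapsto g\circ f,
\end{equation*}
is $K$-bilinear in each argument separately. The universal property of $\otimes_{K}$ then produces a unique $K$-linear map $\mu_{x,y,z}:\mathcal{C}(y,z)\otimes\mathcal{C}(x,y)\rightarrow\mathcal{C}(x,z)$ with $\mu_{x,y,z}(g\otimes f)=g\circ f$ on elementary tensors. Conversely, pre-composing any such $\mu_{x,y,z}$ with the canonical bilinear map $\mathcal{C}(y,z)\times\mathcal{C}(x,y)\rightarrow\mathcal{C}(y,z)\otimes\mathcal{C}(x,y)$ returns a bilinear composition law, and the two constructions are mutually inverse. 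This is precisely the identification asserted in the statement.

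There is no real obstacle to overcome here: once the composition is known to be bilinear, its factorisation through the tensor product is automatic. The point of isolating the reformulation is not merely cosmetic, however, since it allows a $K$-linear category to be treated as a many-object analogue of a $K$-algebra, with structure maps $\mu_{x,y,z}$ playing the role of multiplication. This is the viewpoint under which $\mathcal{C}$-bimodules, the bar resolution, and the Hochschild--Mitchell cohomology invoked in the sequel are most naturally defined; I would therefore also verify, essentially by chasing diagrams, that the associativity and unitality axioms for $\circ$ translate into the expected commutative diagrams for $\{\mu_{x,y,z}\}$ together with the identity maps $1_{x}\in\mathcal{C}(x,x)$.
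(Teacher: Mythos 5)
The statement is a definitional preliminary with no proof in the paper; the only nontrivial observation is the factorisation of the bilinear composition through $\otimes_K$, which your appeal to the universal property of the tensor product establishes correctly and is exactly what the paper implicitly relies on. Your proposal is correct and consistent with the paper's (unstated) reasoning.
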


\begin{claim}
\textbf{Modules and bimodules over }$K$\textbf{-linear categories.} A left
module over $\mathcal{C}$ is a $K$-linear functor $M:\mathcal{C}\rightarrow
K $-$\mathrm{Mod.}$ Note that $M$ is defined by a family of vector spaces $%
({}_{x}M)_{x\in \mathcal{C}_{0}}$ and $K$-linear maps%
\begin{equation*}
\rhd :\mathcal{C}(y,x)\otimes {}_{x}M\rightarrow {}_{y}M
\end{equation*}%
satisfying identities that are similar to that ones that appear in the
definition of modules over associative algebras. A module morphism $%
f:M\rightarrow N$ is a natural transformation between the functors $M$ and $%
N.$ It is given by a family $({}_{x}{}f)_{x\in \mathcal{C}_{0}}$ of $K$%
-linear maps $_{x}f:{}_{x}M\rightarrow {}_{x}N,$ which are also linear with
respect to the structure maps that define the module structures on $M$ and $%
N $. Right $\mathcal{C}$-modules are defined analogously. We obtain two
categories $\mathcal{C}$-$\mathrm{Mod}$ and $\mathrm{Mod}$-$\mathcal{C}.$

To define $\mathcal{C}$-bimodules one defines a new linear category $%
\mathcal{C}\boxtimes _{K}\mathcal{C}^{op},$ see \cite{HS} for details. By
definition, a $\mathcal{C}$-bimodule is a left $\mathcal{C}\boxtimes _{K}%
\mathcal{C}^{op}$-module, that is a family $({}_{x}M_{y})_{(x,y)\in \mathcal{%
C}_{0}\times \mathcal{C}_{0}}$ together with left and right actions%
\begin{eqnarray*}
\rhd &:&\mathcal{C}(y,x)\otimes {}_{x}M_{z}\rightarrow {}_{y}M_{z} \\
\lhd &:&{}_{z}M_{x}\otimes \mathcal{C}(y,x){}\rightarrow {}_{z}M_{y}
\end{eqnarray*}%
such that, for all $x_{0}$ and $y_{0}$ in $\mathcal{C}_{0},$ the pairs $%
\left( ({}_{x}M_{y_{0}})_{x\in \mathcal{C}_{0}},\rhd \right) $ and $\left(
({}_{x_{0}}M_{y})_{y\in \mathcal{C}_{0}},\lhd \right) $ are a left and a
right $\mathcal{C}$-module, respectively, and these structures are
compatible in the obvious sense. We shall denote these modules by $%
_{-}M_{y_{0}}$ and $_{x_{0}}M_{-},$ respectively. The category of $\mathcal{C%
}$-bimodules is denoted by $\mathcal{C}$-$\mathrm{Mod}$-$\mathcal{C}.$ Note
that $\mathcal{C}$ can be seen in a canonical way as an object in $\mathcal{C%
}$-$\mathrm{Mod}$-$\mathcal{C}.$

The categories $\mathcal{C}$-$\mathrm{Mod,}$ $\mathrm{Mod}$-$\mathcal{C}$
and $\mathcal{C}$-$\mathrm{Mod}$-$\mathcal{C}$ are abelian and have enough
projective and injective objects, cf. \cite{HS}. Thus we may consider
\textrm{Ext }functors in these categories.
\end{claim}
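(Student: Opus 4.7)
The plan is to treat the final claim as a package of background/bookkeeping facts about the functor category $\mathcal{C}$-$\mathrm{Mod}$ and to verify each assertion either by pointwise reduction to $K$-$\mathrm{Mod}$ or by Yoneda-type constructions standard in functor category theory.

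First I would unpack the stated equivalence between a $K$-linear functor $M \colon \mathcal{C} \to K$-$\mathrm{Mod}$ and the data $(({}_xM)_{x\in \mathcal{C}_0}, \rhd)$: the morphism component $\mathcal{C}(x,y) \to \Hom_K({}_xM, {}_yM)$ transforms via tensor-hom adjunction into an action $\rhd \colon \mathcal{C}(x,y) \otimes {}_xM \to {}_yM$, and functoriality of $M$ translates exactly into associativity and unitality of $\rhd$. The analogous unpacking for right modules and, applied to $\mathcal{C}\boxtimes_K \mathcal{C}^{op}$, for bimodules is the same bookkeeping; the compatibility of left and right actions on ${}_xM_y$ corresponds precisely to the bifunctoriality of the associated $K$-linear functor out of $\mathcal{C}\boxtimes_K \mathcal{C}^{op}$. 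The canonical $\mathcal{C}$-bimodule structure on $\mathcal{C}$ itself is then just composition in $\mathcal{C}$ viewed as simultaneous left and right actions, with associativity of $\circ$ encoding every axiom automatically.

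For the abelian structure I would observe that kernels, cokernels, images and biproducts of natural transformations between $K$-linear functors into $K$-$\mathrm{Mod}$ are computed componentwise; since $K$-$\mathrm{Mod}$ is abelian, so is $\mathcal{C}$-$\mathrm{Mod}$, and the same argument applies to $\mathrm{Mod}$-$\mathcal{C}$ and $\mathcal{C}$-$\mathrm{Mod}$-$\mathcal{C}$. For enough projectives, the natural candidates are the representable modules $\mathcal{C}(x,-)$: by Yoneda one has $\Hom_{\mathcal{C}\text{-}\mathrm{Mod}}(\mathcal{C}(x,-), N) \cong {}_xN$, so representables are projective, and every module is a quotient of a direct sum of representables indexed by a generating family of elements taken from its fibers. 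Enough injectives follows dually, using the $K$-dual cogenerators $\Hom_K(\mathcal{C}(-,x), K)$, which are injective because $\Hom_K(-, K)$ is exact.

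The main obstacle, if any, is the careful verification of enough injectives, where the dualization is less transparent than the projective construction; however, since the whole claim is explicitly attributed to \cite{HS}, the ``proof'' essentially reduces to citing that reference after confirming that each assertion is nothing but the pointwise restatement of a standard functor-category fact.
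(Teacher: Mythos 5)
This item is background material for which the paper offers no proof at all---it simply records the definitions and cites \cite{HS} for the assertions that $\mathcal{C}$-$\mathrm{Mod}$, $\mathrm{Mod}$-$\mathcal{C}$ and $\mathcal{C}$-$\mathrm{Mod}$-$\mathcal{C}$ are abelian with enough projectives and injectives, which is exactly the stance you end on. Your sketched verifications (pointwise computation of kernels/cokernels in $K$-$\mathrm{Mod}$, projectivity of the representables via the linear Yoneda isomorphism $\Hom_{\mathcal{C}\text{-}\mathrm{Mod}}(\mathcal{C}(x,-),N)\cong {}_xN$, and injectivity of the duals $\Hom_K(\mathcal{C}(-,x),K)$ via the exactness of evaluation followed by $\Hom_K(-,K)$) are the standard, correct arguments behind that citation, so the proposal is sound and consistent with the paper.
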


\begin{claim}
\textbf{Hochschild-Mitchell cohomology. }Let $\mathcal{C}$ be a $K$-linear
category, and let $M$ be a $\mathcal{C}$-bimodule. Hochschild-Mitchell
cohomology of $\mathcal{C}$ with coefficients in $M$ is defined by%
\begin{equation*}
H^{\ast }(\mathcal{C},M):=\mathrm{Ext}_{\mathcal{C}-\mathcal{C}}^{\ast }(%
\mathcal{C},M),
\end{equation*}%
where $\mathrm{Ext}_{\mathcal{C}-\mathcal{C}}^{\ast }(\mathcal{-},-)$ denote
the $\mathrm{Ext}$ functors in the category $\mathcal{C}$-$\mathrm{Mod}$-$%
\mathcal{C}.$

Many of the properties of Hochschild-Mitchell cohomology follow immediately
from the fact that this cohomology theory is defined using derived functors
in an abelian category. The most important ones for our work are the
following.

First, if $n\in \mathbb{N}^{\ast }$ and $M$ is an injective $\mathcal{C}$%
-bimodule, then $H^{n}(\mathcal{C},M)=0$. This equality also holds if $%
\mathcal{C}$ is a projective as a $\mathcal{C}$-bimodule.

Second, if $0\rightarrow M\rightarrow N\rightarrow P\rightarrow 0$ is an
exact sequence of $\mathcal{C}$-bimodules, then the exact sequence of the
\textrm{Ext}\textit{\ }functor, applied to the above short exact sequence,
yields the following long exact sequence:
\begin{align}
0\rightarrow & H^{0}(\mathcal{C},M)\rightarrow H^{0}(\mathcal{C}%
,N)\rightarrow H^{0}(\mathcal{C},P)\rightarrow H^{1}(\mathcal{C}%
,M)\rightarrow H^{1}(\mathcal{C},N)\rightarrow H^{1}(\mathcal{C}%
,P)\rightarrow ...  \label{eq:leq} \\
\rightarrow & H^{n}(\mathcal{C},M)\rightarrow H^{n}(\mathcal{C}%
,N)\rightarrow H^{n}(\mathcal{C},P)\rightarrow H^{n+1}(\mathcal{C}%
,M)\rightarrow H^{n+1}(\mathcal{C},N)\rightarrow H^{n+1}(\mathcal{C}%
,P)\rightarrow ...  \notag
\end{align}
\end{claim}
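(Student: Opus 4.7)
The plan is to derive the three substantive assertions contained in the last claim---the vanishing $H^{n}(\mathcal{C},M)=0$ for injective $M$, the same vanishing when $\mathcal{C}$ is projective as a bimodule, and the long exact sequence \eqref{eq:leq}---as immediate consequences of the general derived-functor formalism in the abelian category $\mathcal{C}$-$\mathrm{Mod}$-$\mathcal{C}$. The only input needed is that this bimodule category is abelian with enough projective and enough injective objects, a fact that was granted in the preceding paragraph with reference to \cite{HS}.

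First I would unwind the definition $H^{n}(\mathcal{C},M)=\mathrm{Ext}^{n}_{\mathcal{C}-\mathcal{C}}(\mathcal{C},M)$ and compute these Ext groups as the cohomology of $\mathrm{Hom}_{\mathcal{C}-\mathcal{C}}(\mathcal{C},I^{\bullet})$ for an injective resolution $0\to M\to I^{\bullet}$. When $M$ is itself injective I would simply take the trivial resolution with $I^{0}=M$ and $I^{n}=0$ for $n\geq 1$, so that the associated Hom complex is concentrated in degree zero and therefore $H^{n}(\mathcal{C},M)=0$ for every $n\geq 1$. To obtain the second vanishing statement I would invoke the balance of $\mathrm{Ext}$, which is available precisely because the bimodule category also has enough projectives: the same Ext groups may equivalently be computed as the cohomology of $\mathrm{Hom}_{\mathcal{C}-\mathcal{C}}(P^{\bullet},M)$ for any projective resolution $P^{\bullet}\to\mathcal{C}$. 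If $\mathcal{C}$ is already projective as a bimodule, then $P^{0}=\mathcal{C}$ with $P^{n}=0$ for $n\geq 1$ serves as such a resolution, and the same argument yields $H^{n}(\mathcal{C},M)=0$ for every $n\geq 1$ and every $\mathcal{C}$-bimodule $M$.

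For the long exact sequence \eqref{eq:leq} I would follow the classical construction: apply the horseshoe lemma to the short exact sequence $0\to M\to N\to P\to 0$ to obtain a termwise split short exact sequence $0\to I_{M}^{\bullet}\to I_{N}^{\bullet}\to I_{P}^{\bullet}\to 0$ of injective resolutions, apply the left exact functor $\mathrm{Hom}_{\mathcal{C}-\mathcal{C}}(\mathcal{C},-)$---which preserves the termwise splittings and therefore turns the sequence of resolutions into an honest short exact sequence of cochain complexes---and extract the associated long exact cohomology sequence, identifying the terms with the corresponding Hochschild-Mitchell groups via the definition.

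I do not anticipate a genuine obstacle anywhere in this plan: each of the three assertions is a direct specialization to $\mathcal{C}$-$\mathrm{Mod}$-$\mathcal{C}$ of a standard fact about derived bifunctors. The only conceptual prerequisite---the existence of enough projectives and injectives, and the resulting balance of $\mathrm{Ext}$---is exactly what the preceding discussion imports from \cite{HS}, so everything reduces to standard homological bookkeeping.
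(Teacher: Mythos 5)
Your proposal is correct and is precisely the standard derived-functor argument that the paper itself invokes without writing out: the paper states these three facts as immediate consequences of $\mathrm{Ext}$ being computed in the abelian category $\mathcal{C}$-$\mathrm{Mod}$-$\mathcal{C}$ with enough projectives and injectives, and your trivial-resolution, balance-of-$\mathrm{Ext}$, and horseshoe-lemma steps are exactly the bookkeeping being elided. No gap.
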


\section{Separable linear categories}

We are going to study the $K$-linear categories that are simple from a
cohomological point of view. More exactly, we are going to study the
properties of a $K$-linear category $\mathcal{C}$ such that its Hochschild
cohomology in positive degrees is trivial.

\begin{definition}
A $K$-linear category $\mathcal{C}$ is separable if $H^{1}(\mathcal{C},M)=0,$
for any $\mathcal{C}$-bimodule $M$.
\end{definition}

\begin{lemma}
The bimodule $\mathcal{C}\otimes \mathcal{C}$ is projective (i.e. it is a
projective object in $\mathcal{C}$-$\mathrm{Mod}$-$\mathcal{C}$).
\end{lemma}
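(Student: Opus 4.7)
The plan is to exhibit $\mathcal{C}\otimes\mathcal{C}$ as a direct sum of \emph{representable} bimodules and then invoke a Yoneda-type identification to show each summand is projective. Concretely, for each pair $(a,b)\in\mathcal{C}_{0}\times\mathcal{C}_{0}$, I introduce the $\mathcal{C}$-bimodule $F^{a,b}$ whose components are
\[
{}_{x}F_{y}^{a,b}:=\mathcal{C}(a,x)\otimes\mathcal{C}(y,b),
\]
equipped with the obvious actions $h\rhd(f\otimes g):=(h\circ f)\otimes g$ and $(f\otimes g)\lhd h:=f\otimes(g\circ h)$. Under the natural reading of $\mathcal{C}\otimes\mathcal{C}$ as a $\mathcal{C}$-bimodule, one obtains the decomposition
\[
\mathcal{C}\otimes\mathcal{C}\cong\bigoplus_{(a,b)\in\mathcal{C}_{0}\times\mathcal{C}_{0}}F^{a,b}
\]
in $\mathcal{C}$-$\mathrm{Mod}$-$\mathcal{C}$. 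Since arbitrary direct sums of projectives in this abelian category remain projective, it suffices to prove that each $F^{a,b}$ is projective.

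For that step, I would establish the natural isomorphism
\[
\mathrm{Hom}_{\mathcal{C}-\mathcal{C}}(F^{a,b},M)\cong{}_{a}M_{b},\qquad\phi\longmapsto{}_{a}\phi_{b}(\mathrm{id}_{a}\otimes\mathrm{id}_{b}),
\]
whose inverse sends $m\in{}_{a}M_{b}$ to the bimodule morphism determined on components by $(f\otimes g)\mapsto f\rhd m\lhd g$. Checking that the two assignments are mutually inverse, and that they are natural in $M$, follows at once from the associativity and unit axioms for $\rhd$ and $\lhd$. This identifies $\mathrm{Hom}_{\mathcal{C}-\mathcal{C}}(F^{a,b},-)$ with the evaluation functor $M\mapsto{}_{a}M_{b}$, which is exact because (co)limits in $\mathcal{C}$-$\mathrm{Mod}$-$\mathcal{C}$ are computed component-wise. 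Hence $F^{a,b}$ is projective, and the lemma follows.

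The only genuine obstacle I anticipate is the preliminary bookkeeping: one has to pin down the precise bimodule structure on $\mathcal{C}\otimes\mathcal{C}$ and verify that the displayed direct-sum decomposition is actually an isomorphism of bimodules (rather than merely a pointwise identification of vector spaces). Once this is accomplished, everything else reduces to the standard principle that representable functors are projective, transported to the present enriched setting.
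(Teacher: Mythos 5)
Your overall strategy is sound, but the displayed decomposition is wrong as stated, and it is exactly the ``bookkeeping'' point you yourself flagged as the risk. In this paper $\mathcal{C}\otimes\mathcal{C}$ is the bimodule with components
\[
{}_{x}(\mathcal{C}\otimes\mathcal{C})_{y}=\bigoplus_{z\in\mathcal{C}_{0}}\mathcal{C}(z,x)\otimes\mathcal{C}(y,z),
\]
i.e.\ the middle index is contracted over a \emph{single} object $z$. Your candidate $\bigoplus_{(a,b)\in\mathcal{C}_{0}\times\mathcal{C}_{0}}F^{a,b}$ has components $\bigoplus_{(a,b)}\mathcal{C}(a,x)\otimes\mathcal{C}(y,b)$, which is the (much larger) external tensor product $\mathcal{C}\otimes_{K}\mathcal{C}$ and is not isomorphic to the bimodule in the lemma. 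The fix is immediate: keep only the diagonal summands, so that
\[
\mathcal{C}\otimes\mathcal{C}\cong\bigoplus_{z\in\mathcal{C}_{0}}F^{z,z},
\qquad {}_{x}F^{z,z}_{y}=\mathcal{C}(z,x)\otimes\mathcal{C}(y,z),
\]
and this now really is an isomorphism of bimodules, since the actions only touch the outer variables. With that correction your Yoneda step, specialized to $a=b=z$, gives $\mathrm{Hom}_{\mathcal{C}-\mathcal{C}}(F^{z,z},M)\cong{}_{z}M_{z}$ via $\phi\mapsto{}_{z}\phi_{z}(1_{z}\otimes 1_{z})$, evaluation is exact because everything in $\mathcal{C}$-$\mathrm{Mod}$-$\mathcal{C}$ is computed componentwise, and the lemma follows.

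Once repaired, your argument is really the paper's proof in conceptual clothing: the paper verifies the lifting property directly by choosing, for each $z$, a preimage ${}_{z}m_{z}$ of ${}_{z}\varphi_{z}(1_{z}\otimes 1_{z})$ under the epimorphism and extending by $f\otimes g\mapsto f\rhd{}_{z}m_{z}\lhd g$ --- which is precisely your isomorphism $\mathrm{Hom}_{\mathcal{C}-\mathcal{C}}(F^{z,z},M)\cong{}_{z}M_{z}$ applied to lift the generator. Your version is more modular (it isolates the statement that each $F^{z,z}$ is free on the generator $1_{z}\otimes 1_{z}$, a fact reusable elsewhere), while the paper's is shorter and avoids having to justify the direct-sum decomposition as an isomorphism of bimodules. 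Either is acceptable, but you must state the correct decomposition and actually check it respects the two actions.
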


\begin{proof}
Consider the following diagram
\begin{equation*}
\begin{tikzpicture}[description/.style={fill=white,inner sep=2pt}] \matrix
(m) [matrix of math nodes, row sep=3em, column sep=2.5em, text height=1.5ex,
text depth=0.25ex] { &&\mathcal{C}\otimes\mathcal{C} \\ & M & N & 0\\ };
\path[->,font=\scriptsize] (m-2-3) edge node[auto] {$ $} (m-2-4) (m-2-2)
edge node[auto] {$ \pi $} (m-2-3) (m-1-3) edge node[auto] {$ \varphi $}
(m-2-3); \end{tikzpicture}
\end{equation*}%
\noindent where $\varphi $ and $\pi $ are arbitrary bimodule morphisms with $%
\pi $ epimorphism. By definition we have $_{x}(\mathcal{C}\otimes \mathcal{C}%
)_{y}=\bigoplus_{z}\mathcal{C}(z,x)\otimes \mathcal{C}(y,z).$ Since $\pi $
is an epimorphism, for every $z\in \mathcal{C}_{0},$ there is $_{z}m_{z}\in
{}_{z}M_{z}$ such that $_{z}\pi _{z}(_{z}m_{z})=$ $_{z}\varphi
_{z}(1_{z}\otimes 1_{z}).$ {}We define%
\begin{equation*}
_{x}\psi _{y}^{z}:{}\mathcal{C}(z,x)\otimes \mathcal{C}(y,z)\rightarrow
{}_{x}M_{y},\qquad _{x}\psi _{y}(f\otimes g)=f\rhd {}_{z}m_{z}\lhd g.
\end{equation*}
Let $_{x}\psi _{y}:{}_{x}(\mathcal{C}\otimes \mathcal{C})_{y}\rightarrow
{}_{x}M_{y}$ be the $K$-linear map induced by the family $\left( _{x}\psi
_{y}^{z}\right) _{z\in \mathcal{C}_{0}}.$ It is easy to see that the family $%
\left( _{x}\psi _{y}\right) _{x,y\in \mathcal{C}_{0}}\ $is a morphism of
bimodules such that $\pi \circ \psi =\varphi $. Thus $\mathcal{C}\otimes
\mathcal{C}$ is projective.
\end{proof}

\begin{theorem}
\label{pr:caracterizare} Let $\mathcal{C}$ be a K-linear category. The
following statements are equivalent:

\begin{enumerate}
\item $\mathcal{C}$ is separable.

\item $H^{n}(\mathcal{C},M)=0$ for all $n>0$ and $M\in \mathcal{C}$-$\mathrm{%
Mod}$-$\mathcal{C}.$

\item $\mathcal{C}$ is projective as a bimodule.

\item The canonical morphism $\mathcal{C}\otimes \mathcal{C}\rightarrow
\mathcal{C}$ splits in the category $\mathcal{C}$-$\mathrm{Mod}$-$\mathcal{C}%
.$

\item There is a family $(a_{x}^{y})_{x,y\in \mathcal{C}_{0}}$ with the
following properties:

\begin{enumerate}
\item The element $a_{x}^{y}\in \mathcal{C}(y,x)\otimes \mathcal{C}(x,y),\ $%
for all $x,y\in \mathcal{C}_{0}$.

\item For any $x\in \mathcal{C}_{0},$ the family $(a_{x}^{y})_{y\in \mathcal{%
C}_{0}}$ is of finite support.

\item For every object $x,$ we have $\sum_{y\in \mathcal{C}_{0}}\mathrm{comp}%
(a_{x}^{y})=1_{x}.$

\item If $f\in C(x,z),$ then $f\vartriangleright
a_{x}^{y}=a_{z}^{y}\vartriangleleft f$.
\end{enumerate}
\end{enumerate}
\end{theorem}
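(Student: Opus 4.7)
The plan is to prove the equivalences in the cycle $(2) \Rightarrow (1) \Rightarrow (4) \Rightarrow (3) \Rightarrow (2)$, and then to establish $(4) \Leftrightarrow (5)$ by unpacking what a bimodule splitting of the multiplication looks like componentwise.

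First, $(2) \Rightarrow (1)$ is immediate by specialization to $n=1$. For $(1) \Rightarrow (4)$, denote by $\mu\colon \mathcal{C}\otimes\mathcal{C}\to \mathcal{C}$ the bimodule morphism induced by composition and let $J=\ker \mu$. The short exact sequence
\begin{equation*}
0\to J\to \mathcal{C}\otimes\mathcal{C}\stackrel{\mu}{\to}\mathcal{C}\to 0
\end{equation*}
yields, via (\ref{eq:leq}), a connecting map $H^0(\mathcal{C},\mathcal{C})\to H^1(\mathcal{C},J)$; since $\mathcal{C}$ is assumed separable, $H^1(\mathcal{C},J)=0$, so $\mathrm{id}_{\mathcal{C}}\in H^0(\mathcal{C},\mathcal{C})$ lifts to a bimodule morphism $\sigma\colon \mathcal{C}\to \mathcal{C}\otimes\mathcal{C}$ with $\mu\circ\sigma=\mathrm{id}_{\mathcal{C}}$. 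Then $(4) \Rightarrow (3)$ is obtained by observing that $\mathcal{C}$ is a direct summand of the projective bimodule $\mathcal{C}\otimes\mathcal{C}$ (the previous lemma), hence is itself projective. Finally, $(3) \Rightarrow (2)$ follows from $H^n(\mathcal{C},M)=\mathrm{Ext}^n_{\mathcal{C}\text{-}\mathcal{C}}(\mathcal{C},M)=0$ for $n>0$ whenever $\mathcal{C}$ is projective.

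For the equivalence $(4) \Leftrightarrow (5)$, the key observation is that the bimodule $\mathcal{C}$ is generated by the identities, in the sense that every $f\in \mathcal{C}(y,x)={}_x\mathcal{C}_y$ satisfies $f=f\rhd 1_y=1_x\lhd f$. Given a splitting $\sigma$ of $\mu$, each element $\sigma(1_x)\in {}_x(\mathcal{C}\otimes\mathcal{C})_x=\bigoplus_{y}\mathcal{C}(y,x)\otimes \mathcal{C}(x,y)$ decomposes as a \emph{finite} sum $\sigma(1_x)=\sum_{y}a_x^y$ with $a_x^y\in \mathcal{C}(y,x)\otimes\mathcal{C}(x,y)$, giving (a) and (b). Condition (c) is simply $\mu(\sigma(1_x))=1_x$. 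Condition (d) comes from the fact that $\sigma$ is a bimodule map: for $f\in \mathcal{C}(x,z)$, computing $\sigma(f)$ as $f\rhd \sigma(1_x)$ and as $\sigma(1_z)\lhd f$ gives $\sum_y f\rhd a_x^y=\sum_y a_z^y\lhd f$, and since $f\rhd a_x^y$ and $a_z^y\lhd f$ lie in the same summand $\mathcal{C}(y,z)\otimes\mathcal{C}(x,y)$ of ${}_z(\mathcal{C}\otimes\mathcal{C})_x$, one gets equality for each $y$ separately.

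Conversely, starting from a family $(a_x^y)$ as in (5), the proposal is to define
\begin{equation*}
\sigma\colon\mathcal{C}\to\mathcal{C}\otimes\mathcal{C},\qquad \sigma(f):=f\rhd\Bigl(\sum_y a_x^y\Bigr)\quad\text{for }f\in\mathcal{C}(y,x),
\end{equation*}
which by (d) also equals $\sigma(1_x)\lhd f$; one then checks it is a bimodule morphism and that $\mu\circ\sigma=\mathrm{id}$ using (c). The main obstacle here is purely bookkeeping: one has to verify that the map $\sigma$ is well defined (independent of the choice of the presentation $f=f\rhd 1_y=1_x\lhd f$) and is bilinear with respect to both actions, which rests on iteratively applying (d). Once that is done, both directions of $(4)\Leftrightarrow(5)$ are complete, closing the circle of implications.
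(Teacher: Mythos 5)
Your proof is correct, but you route the homological part of the argument differently from the paper. The paper establishes $(1)\Leftrightarrow (2)$ by a dimension-shifting induction (embed $M$ into an injective $I$, pass to the cokernel $Q$, and use the long exact sequence together with $H^{n+1}(\mathcal{C},I)=0$), then gets $(2)\Leftrightarrow (3)$ from the Ext-characterization of projectivity and only afterwards deduces the splitting $(4)$ by lifting $1_{\mathcal{C}}$ through the epimorphism $\mathrm{comp}$. You instead go directly from $(1)$ to $(4)$: applying the long exact sequence to $0\to J\to \mathcal{C}\otimes \mathcal{C}\to \mathcal{C}\to 0$ and using $H^{1}(\mathcal{C},J)=0$ to conclude that $\mathrm{Hom}_{\mathcal{C}\text{-}\mathcal{C}}(\mathcal{C},\mathcal{C}\otimes\mathcal{C})\to \mathrm{Hom}_{\mathcal{C}\text{-}\mathcal{C}}(\mathcal{C},\mathcal{C})$ is surjective, which produces the section $\sigma$ in one step; this is the classical argument for separable algebras and it lets you avoid the induction entirely, since $(3)\Rightarrow(2)$ is then immediate from the vanishing of higher Ext on a projective first argument. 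The trade-off is that the paper's induction is reusable for showing that a single vanishing degree propagates upward in other situations, whereas your connecting-map argument is shorter and isolates exactly where the hypothesis $H^{1}=0$ is used. Your treatment of $(4)\Leftrightarrow(5)$ is essentially the paper's: extracting $a_{x}^{y}$ from $\sigma(1_{x})$ and comparing components in the direct sum decomposition of ${}_{z}(\mathcal{C}\otimes\mathcal{C})_{x}$. One small point of hygiene: in your definition of $\sigma$ in the direction $(5)\Rightarrow(4)$ you reuse the letter $y$ both as the source of $f$ and as the summation index, and you should make explicit (as the paper does) that well-definedness and right $\mathcal{C}$-linearity of $\sigma$ are exactly where condition (d) enters; but these are presentational matters, not gaps.
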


\begin{proof}
$(1)\Leftrightarrow (2)$ Recall that, by definition, $\mathcal{C}$ is
separable if and only if $H^{1}(\mathcal{C},M)=0$, for every bimodule M.
Therefore, $(2)\Rightarrow (1)$ is straightforward. The other implication
can be proved by induction as follows. We assume that $H^{n}(\mathcal{C}%
,Q)=0,\ $for all $n>0$ and $Q\in \mathcal{C}$-$\mathrm{Mod}$-$\mathcal{C}.$
As in the category of $\mathcal{C}$-bicomodules there are enough injective
objects, there exists an injective morphism $i:M\rightarrow I$ in $\mathcal{C%
}$-$\mathrm{Mod}$\textrm{-}$\mathcal{C}$. Let $Q$ be the cokernel of $i,$ so
the following sequence is exact in $\mathcal{C}$-$\mathrm{Mod}$\textrm{-}$%
\mathcal{C}$.
\begin{equation*}
0\longrightarrow M\longrightarrow I\longrightarrow Q\longrightarrow 0.
\end{equation*}%
From (\ref{eq:leq}) we get the following exact sequence:%
\begin{equation*}
H^{n}(\mathcal{C},Q)\rightarrow H^{n+1}(\mathcal{C},M)\rightarrow H^{n+1}(%
\mathcal{C},I)
\end{equation*}%
By induction hypothesis, $H^{n}(\mathcal{C},Q)=0.$ On the other hand $%
H^{n+1}(\mathcal{C},I)=0,$ as $I$ is injective as a $\mathcal{C}$-bimodule.
Thus $H^{n+1}(\mathcal{C},M)=0,$ too.

$(2)\Leftrightarrow (3)$ Hochschild-Mitchell cohomology of $\mathcal{C}$
with coefficients in $M$ is defined by $H^{n}(\mathcal{C},M)=\mathrm{Ext}_{%
\mathcal{C}-\mathcal{C}}^{n}(C,M).$ Furthermore, an object $X$ in an abelian
category $\mathcal{A}$ is projective if, and only if, $\mathrm{Ext}_{%
\mathcal{A}}^{n}(X,Y)=0,$ for all $n>0$ and $Y\in \mathrm{Ob}(\mathcal{A}).$
Thus, $\mathcal{C}$ is projective as a $\mathcal{C}$-bimodule if and only if
$H^{n}(\mathcal{C},M)=0,$ for all $n>0\ $and $M\in \mathcal{C}$-$\mathrm{Mod}
$-$\mathcal{C}$.

$(3)\Rightarrow (4)$ Consider the following diagram in $\mathcal{C}$-$%
\mathrm{Mod}$-$\mathcal{C}:$%
\begin{equation*}
\begin{tikzpicture}[description/.style={fill=white,inner sep=2pt}] \matrix
(m) [matrix of math nodes, row sep=3em, column sep=2.5em, text height=1.5ex,
text depth=0.25ex] { && C\\ & \mathcal{C}\otimes\mathcal{C} & \mathcal{C}\\
}; \path[->,font=\scriptsize] (m-2-2) edge node[auto,swap] {$ \mathrm{comp}
$} (m-2-3) (m-1-3) edge node[auto] {$ 1_{\mathcal{C}} $} (m-2-3);
\end{tikzpicture}
\end{equation*}%
If $\mathcal{C}$ is projective, then there exists a morphism of $\mathcal{C}$%
-bimodules $\varphi :\mathcal{C}\rightarrow \mathcal{C}\otimes \mathcal{C}$
such that $\mathrm{comp}\circ \varphi =1_{C}$. This proves that $\varphi $
is a section of $\mathrm{comp,}$ that is $\mathrm{comp}$ splits in $\mathcal{%
C}$-$\mathrm{Mod}$-$\mathcal{C}$.

$(4)\Rightarrow (3)$ Suppose that the canonical morphism $\mathrm{comp}:%
\mathcal{C}\otimes \mathcal{C}\rightarrow \mathcal{C}$ has a section in $%
\mathcal{C}$-$\mathrm{Mod}$-$\mathcal{C}$. It results that $M,$ the kernel
of $\mathrm{comp}$, is a complement of $\mathcal{C}$ in $\mathcal{C}\otimes
\mathcal{C}$. From the above lemma $\mathcal{C}\otimes \mathcal{C}$ is
projective, so $\mathcal{C}$ is a projective $\mathcal{C}$-bimodule, since
it is a direct summand in a projective bimodule.

$(4)\Rightarrow (5)$ Let $\varphi :\mathcal{C}\rightarrow \mathcal{C}\otimes
\mathcal{C}$ be a $\mathcal{C}$-bimodule morphism such that $\mathrm{comp}%
\circ \varphi =1_{\mathcal{C}}$. We fix $(x,y)\in \mathcal{C}_{0}\times
\mathcal{C}_{0}$. Let $\ {}_{x}\varphi _{y}:\mathcal{C}(y,x)\rightarrow
\bigoplus_{z\in C_{0}}\mathcal{C}(z,x)\otimes \mathcal{C}(y,z)$ be the
corresponding component of $\varphi .$ We have $\ $%
\begin{equation*}
{}_{x}\varphi _{x}(1_{x})\in \bigoplus_{y\in C_{0}}\mathcal{C}(y,x)\otimes
\mathcal{C}(x,y),
\end{equation*}%
so $_{x}\varphi _{x}(1_{x})=(a_{x}^{y})_{y\in C_{0}}$, where $a_{x}^{y}\in
\mathcal{C}(y,x)\otimes \mathcal{C}(x,y)$. Hence the family $%
(a_{x}^{y})_{x,y\in \mathcal{C}_{0}}$ satisfies the first property in $(5)$.
It also satisfies the second property as $_{x}\varphi _{x}(1_{x})$ is an
element in $\bigoplus_{y\in \mathcal{C}_{0}}\mathcal{C}(y,x)\otimes \mathcal{%
C}(x,y)$, so the family $(a_{x}^{y})_{y\in \mathcal{C}_{0}}$ has finite
support. Moreover, $\mathrm{comp}(_{x}\varphi _{x}(1_{x}))=1_{x}$, since $%
\varphi $ is a section of $\mathrm{comp}$. Thus, for all $x\in \mathcal{C}%
_{0}$%
\begin{equation*}
\sum_{y\in \mathcal{C}_{0}}\mathrm{comp}(a_{x}^{y})=1_{x},
\end{equation*}%
i.e. $(a_{x}^{y})_{y\in \mathcal{C}_{0}}$ satisfies the third property. Let
us now show that $(a_{x}^{y})_{x,y\in \mathcal{C}_{0}}$ satisfies $\ $the
last property$.$ Let $f\in \mathcal{C}(x,z)$. Since $a_{x}^{y}\in \mathcal{C}%
(y,x)\otimes \mathcal{C}(x,y),$ we can write this element as a sum%
\begin{equation*}
a_{x}^{y}=\sum_{i=1}^{n_{x,y}}f_{y,x}^{i}\otimes g_{x,y}^{i},
\end{equation*}%
where $f_{y,x}^{i}\in \mathcal{C}(y,x)$ and $g_{x,y}^{i}\in \mathcal{C}(x,y)$%
. Then we get
\begin{equation*}
f\rhd a_{x}^{y}=\sum_{i=1}^{n_{x,y}}f\circ f_{y,x}^{i}\otimes g_{x,y}^{i}%
\text{ and }a_{z}^{y}\lhd f=\sum_{i=1}^{n_{x,y}}f_{y,z}^{i}\otimes
g_{z,y}^{i}\circ f.
\end{equation*}%
On the other hand
\begin{equation*}
_{x}\varphi _{z}(f)={}_{x}\varphi _{z}(f\rhd 1_{x})=f\rhd {}_{x}\varphi
_{x}(1_{x})=f\rhd (a_{x}^{y})_{y\in \mathcal{C}_{0}}=(f\rhd a_{x}^{y})_{y\in
\mathcal{C}_{0}}
\end{equation*}%
and, similarly,
\begin{equation*}
_{x}\varphi _{z}(f)={}_{x}\varphi _{z}(1_{z}\lhd f)={}_{z}\varphi
_{z}(1_{z})\lhd f=(a_{z}^{y})_{y\in \mathcal{C}_{0}}\lhd f=(a_{z}^{y}\lhd
f)_{y\in \mathcal{C}_{0}}.
\end{equation*}%
So $(f\rhd a_{x}^{y})_{y\in C_{0}}=(a_{z}^{y}\lhd f)_{y\in C_{0}}$. We
deduce that $f\rhd a_{x}^{y}=a_{z}^{y}\lhd f,$ for all $y\in \mathcal{C}%
_{0}. $

$(5)\Rightarrow (4)$ Let $(a_{x}^{y})_{x,y\in \mathcal{C}_{0}}$ a family
which satisfies (a)-(d). We define:
\begin{equation*}
_{x}\varphi _{y}:\mathcal{C}(y,x)\rightarrow \bigoplus_{z\in \mathcal{C}_{0}}%
\mathcal{C}(z,x)\otimes \mathcal{C}(y,z),\qquad _{x}\varphi _{y}(f)=(f\rhd
a_{y}^{z})_{z\in \mathcal{C}_{0}}.
\end{equation*}%
The map $_{x}\varphi _{y}$ is well-defined because $(a_{y}^{z})_{z\in
\mathcal{C}_{0}}$ is of finite support. Obviously $\varphi =(_{x}\varphi
_{y})_{x,y\in \mathcal{C}_{0}}$ is a morphism of left $\mathcal{C}$-modules.
The family $\varphi $ defines a morphism of right $\mathcal{C}$-modules
because $(a_{x}^{y})_{x,y\in \mathcal{C}_{0}}$ satisfies (d). Finally,
taking into account (c), $\varphi :\mathcal{C}\rightarrow \mathcal{C}\otimes
\mathcal{C}$ is a section for $\mathrm{comp}:\mathcal{C}\otimes \mathcal{C}%
\rightarrow \mathcal{C}$ in $\mathcal{C}$-$\mathrm{Mod}$-$\mathcal{C}$ $%
\displaystyle.$
\end{proof}

Using the equivalent characterization of separable linear categories in
Theorem \ref{pr:caracterizare} we shall now prove a generalization of
Zelinsky Theorem, which states that a separable algebra is finite
dimensional (as a vector space over the base field).

\begin{definition}
We say that a $K$-linear category $\mathcal{C}$ is locally finite
dimensional if $\dim _{K}\mathcal{C}(x,y)<\infty $ for all $x,y\in \mathcal{C%
}_{0}$.
\end{definition}

\begin{theorem}
A $K$-linear separable category $\mathcal{C}$ is locally finite dimensional.
\end{theorem}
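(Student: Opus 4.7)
The plan is to invoke characterization (4) of Theorem~\ref{pr:caracterizare}, which produces a bimodule section $\varphi : \mathcal{C} \to \mathcal{C} \otimes \mathcal{C}$ of the composition map $\mathrm{comp}$. Since $\mathrm{comp} \circ \varphi = \mathrm{id}_\mathcal{C}$, each component ${}_y\varphi_x : \mathcal{C}(x,y) \to \bigoplus_{z} \mathcal{C}(z,y) \otimes \mathcal{C}(x,z)$ is injective, so it will suffice to trap its image inside a finite-dimensional subspace of the target.

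Fix $x, y \in \mathcal{C}_0$, and let $(a_x^z)_z$, $(a_y^z)_z$ denote the two finite-support families furnished by condition (5). Expanding $a_x^z = \sum_i \alpha_i^z \otimes \beta_i^z$ and $a_y^z = \sum_k t_k^z \otimes u_k^z$, I set $V_z := \mathrm{span}\{\beta_i^z\}$ and $W_z := \mathrm{span}\{t_k^z\}$: both are finite-dimensional and both vanish outside the finite sets $S_x, S_y$. For $h \in \mathcal{C}(x,y)$, writing $h = h \rhd 1_x = 1_y \lhd h$ and using that $\varphi$ is a bimodule morphism yields the two expressions
\begin{equation*}
\varphi(h) = \sum_{z \in S_x}\sum_i (h \circ \alpha_i^z) \otimes \beta_i^z = \sum_{z \in S_y}\sum_k t_k^z \otimes (u_k^z \circ h),
\end{equation*}
both regarded as elements of $\bigoplus_z \mathcal{C}(z,y) \otimes \mathcal{C}(x,z)$. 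Slot by slot, the first form places the $z$-component of $\varphi(h)$ in $\mathcal{C}(z,y) \otimes V_z$, and the second in $W_z \otimes \mathcal{C}(x,z)$.

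The next step is the elementary tensor identity $(U_0 \otimes U') \cap (U \otimes U'_0) = U_0 \otimes U'_0$ for subspaces $U_0 \subseteq U$, $U'_0 \subseteq U'$, which is a one-line check after completing $U_0, U'_0$ to bases. Applied in each slot, it forces the $z$-component of $\varphi(h)$ to lie inside $W_z \otimes V_z$ and to vanish for $z \notin S_x \cap S_y$. Consequently $\varphi(h)$ is constrained to the finite-dimensional subspace $\bigoplus_{z \in S_x \cap S_y} W_z \otimes V_z$, and injectivity of ${}_y\varphi_x$ forces $\dim_K \mathcal{C}(x,y) < \infty$.

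The main obstacle is conceptual: the two bimodule formulas for $\varphi(h)$ each individually control only one of the two tensor factors, so neither alone delivers finite-dimensionality; the payoff comes only when they are combined through the tensor-intersection identity. The rest is careful bookkeeping, provided one uses the convention ${}_x\mathcal{C}_y = \mathcal{C}(y,x)$ extracted from the proof of the preceding lemma. As a sanity check, in the one-object case this specialises exactly to the classical argument that a separable $K$-algebra is finite-dimensional.
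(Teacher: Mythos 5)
Your proof is correct, but it diverges from the paper's argument at the decisive step, so it is worth comparing the two. Both proofs start from the same identity (the paper's equation (\ref{eq:fxy})): for $h\in\mathcal{C}(x,y)$ the element $h\rhd a_x^z=a_y^z\lhd h$ admits one expression whose right-hand tensor factors lie in the fixed finite-dimensional space $V_z$ and another whose left-hand tensor factors lie in the fixed finite-dimensional space $W_z$. From there the paper chooses a representation of $a_x^z$ of minimal length so that the right-hand factors are linearly independent, introduces dual functionals $\alpha^i_{x,y}$, and contracts (\ref{eq:fxy}) against them to show that post-composition with $f$ maps $\mathcal{V}_{y,x}$ into $\mathcal{V}_{y,z}$; this yields an embedding of $\mathcal{C}(x,z)$ into a finite direct sum of spaces $\mathrm{Hom}_K(\mathcal{V}_{y_j,x},\mathcal{V}_{y_j,z})$, injectivity being checked via property (c). You instead avoid the minimal representation and the dual basis entirely: you read the two expressions as membership in $\mathcal{C}(z,y)\otimes V_z$ and in $W_z\otimes\mathcal{C}(x,z)$ respectively, and invoke the standard identity $(U_0\otimes U')\cap(U\otimes U'_0)=U_0\otimes U'_0$ to trap the $z$-component of $\varphi(h)$ in $W_z\otimes V_z$; injectivity of ${}_y\varphi_x$ (immediate from $\mathrm{comp}\circ\varphi=\mathrm{id}$, which is again property (c)) then finishes the argument. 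Your route is the more symmetric one, exploiting the left and right constraints simultaneously rather than eliminating one tensor leg by duality, and it specializes in the one-object case to the familiar "$xe=ex$ lands in $W\otimes V$" proof of Zelinsky's theorem. The only point to be careful about, which you flag yourself, is the index convention ${}_x\mathcal{C}_y=\mathcal{C}(y,x)$ and the placement of the components of $a_x^z$ and $a_y^z$ in the correct Hom-spaces; with that convention your two displayed expressions do land in the same summand $\mathcal{C}(z,y)\otimes\mathcal{C}(x,z)$, so the intersection argument applies slotwise as claimed. I see no gap.
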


\begin{proof}
Since $\mathcal{C}$ is separable, there is a family $(a_{x}^{y})_{x,y\in
C_{0}}$ that satisfies the properties (a)-(d) in Theorem \ref%
{pr:caracterizare}.(5). We write each $a_{x}^{y}$ as a sum
\begin{equation}
a_{x}^{y}=\sum_{i=1}^{n_{x,y}}f_{y,x}^{i}\otimes g_{x,y}^{i},  \label{eq:axy}
\end{equation}%
with $f_{y,x}\in \mathcal{C}(y,x)$ and $g_{x,y}\in \mathcal{C}(x,y)$. From
all representations of $a_{x}^{y}$ as in (\ref{eq:axy}) we choose one such
that the number $n_{x,y}$ is minimal. For such a choice the set $%
\{g_{x,y}^{i}\mid i=1,...,n_{x,y}\}$ is linearly independent. Thus, for
every $i=1,...,n_{x,y},$ there is a $K$-linear application $\alpha
_{x,y}^{i}:\mathcal{C}(x,y)\rightarrow K$ such that $\alpha
_{x,y}^{i}(g_{x,y}^{j})=\delta _{i,j},$ for any $1\leq j\leq n_{x,y}$. If $%
f\in \mathcal{C}(x,z)$ and $y\in \mathcal{C}_{0}$ then $f\rhd
a_{x}^{y}=a_{z}^{y}\lhd f.\ $ Equivalently, we have the following identity
in $\mathcal{C}(y,z)\otimes \mathcal{C}(x,y)$%
\begin{equation}
\sum_{p=1}^{n_{x,y}}f\circ f_{y,x}^{p}\otimes
g_{x,y}^{p}=\sum_{q=1}^{n_{z,y}}f_{y,z}^{q}\otimes g_{z,y}^{q}\lhd f.
\label{eq:fxy}
\end{equation}%
Let $\mathcal{V}_{y,x}$ denote the vector space generated by $f_{y,x}^{i},$
where $i=1,...,n_{x,y}.$ By construction, $\dim _{K}\mathcal{V}_{y,x}<\infty
$. For a given $i,$ let us apply $1_{\mathcal{C}(y,x)}\otimes \alpha
_{x,y}^{i}$ to the left and right sides of (\ref{eq:fxy}). For $x,y,z\in
\mathcal{C}_{0}$ and $f\in \mathcal{C}(x,z)$, one obtains
\begin{equation*}
f\circ f_{y,x}^{i}=\sum_{q=1}^{n_{z,y}}\alpha _{x,y}^{q}(g_{z,y}^{q}\lhd
f)f_{y,z}^{q}.
\end{equation*}%
This relation shows that $f\circ f_{y,x}^{i}\in \mathcal{V}_{y,z}$.
Furthermore, the composition in $\mathcal{C}$ induces an application
\begin{equation*}
\varphi _{x,y,z}:\mathcal{C}(x,z)\rightarrow \mathrm{Hom}_{K}(\mathcal{V}%
_{y,x},\mathcal{V}_{y,z}),\qquad \varphi _{x,y,z}(f)(g)=f\circ g.
\end{equation*}%
We fix $(x,z)\in \mathcal{C}_{0}\times \mathcal{C}_{0}$. The family $%
(a_{x}^{y})_{y\in \mathcal{C}_{0}}$ is of finite support, so there exist $%
y_{1},...,{y}_{p}\in \mathcal{C}_{0}$ such that $a_{x}^{y}=0,$ for every $y$
which does not belong to $\{y_{1},...,{y}_{p}\}$. We define
\begin{equation*}
\varphi _{x,z}:\mathcal{C}(x,z)\rightarrow \bigoplus_{j=1}^{p}\mathrm{Hom}%
_{K}(\mathcal{V}_{y_{j},x},\mathcal{V}_{y_{j},z}),\qquad \varphi
_{x,z}(f)=\left( \varphi _{x,y_{j},z}(f)\right) _{1\leq j\leq p}.
\end{equation*}%
We claim that $\varphi _{x,z}$ is injective. If $f\in \mathrm{Ker}\varphi
_{x,z}$, then $\varphi _{x,y_{j},z}(f)=0\ $for any $1\leq j\leq p$. Thus $%
\varphi _{x,y_{j},z}(f)(g)=0,\ $for all $g\in \mathcal{V}_{y_{i},x}$. In
particular, by taking $g:=f_{y_{j},x}^{i},$ we get $f\circ f_{y_{j},x}^{i}=0$
for any $1\leq i\leq n_{x,y_{j}}.$ It results that
\begin{equation*}
f\rhd a_{x}^{y_{j}}=\sum_{i=1}^{n_{x,y_{j}}}f\circ f_{y_{j},x}^{i}\otimes
g_{x,y_{j}}^{i}=0,
\end{equation*}%
for all $1\leq j\leq p$. On the other hand, if $y\not\in \{y_{1},...,y_{p}\}$
then $a_{x}^{y}=0$. We deduce that
\begin{equation*}
f=f\rhd 1_{x}=f\rhd \mathrm{comp}(\sum_{j=1}^{p}a_{x}^{y_{j}})=\sum_{j=1}^{p}%
\mathrm{comp}(f\rhd a_{x}^{y_{j}})=0.
\end{equation*}%
In conclusion, $\varphi _{x,z}$ is injective, as we claimed. Therefore, $%
\mathcal{C}(x,z)$ can be embedded in the vector space $\mathcal{V}%
=\bigoplus_{j=1}^{p}\mathrm{Hom}_{K}(\mathcal{V}_{y_{j},x},\mathcal{V}%
_{y_{j},z})$. Note that $\mathcal{V}$ is a finite dimensional vector space,
being a finite direct sum of finite dimensional vector spaces. Thus, $%
\mathcal{C}(x,z)\ $is obviously finite dimensional, for every $x,z\in
\mathcal{C}_{0}.$
\end{proof}

Let $\mathcal{A}$ be a not necessarily linear category. The $K$%
-linearization of $\mathcal{A}$ is the $K$-linear category $K\left[ \mathcal{%
A}\right] $ that has the same objects as $\mathcal{A}$, but
\begin{equation*}
K\left[ \mathcal{A}\right] \left( x,y\right) :=\left\langle f\mid f\in
\mathcal{A}(x,y)\right\rangle _{K}.
\end{equation*}%
Therefore, by definition, $K\left[ \mathcal{A}\right] \left( x,y\right) $ is
the $K$-vector space having $\mathcal{A}(x,y)$ as a basis. The composition
in $K\left[ \mathcal{A}\right] $ is the unique bilinear extension of the
composition in $\mathcal{A}.$

Recall that $\mathcal{G}$ is a groupoid if, and only if, all morphisms in $%
\mathcal{G}$ are invertible. We can now prove the following corollary, that
generalizes Maschke Theorem from group algebras.

\begin{corollary}
Let $\mathcal{G}$ be a small groupoid. Then $K\left[ \mathcal{G}\right] $ is
separable if, and only if, $\mathcal{G}(x,y)$ is a finite set and $%
\left\vert \mathcal{G}(x,y)\right\vert $ is invertible in $K,$ for all $%
x,y\in \mathcal{G}_{0}.$
\end{corollary}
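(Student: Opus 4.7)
The plan is to apply the characterization of separability from Theorem \ref{pr:caracterizare}(5), constructing an explicit separability family in one direction and performing a coefficient-counting argument in the other.

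For the implication $(\Leftarrow)$, assuming each $\mathcal{G}(x,y)$ is finite and has cardinality invertible in $K$ whenever it is non-empty, I would choose a representative object $y_{C}$ in each connected component $C$ of $\mathcal{G}$ (using that $\mathcal{G}$ is small) and define
\begin{equation*}
a_{x}^{y}=\begin{cases} \displaystyle \frac{1}{|\mathrm{Aut}_{\mathcal{G}}(x)|}\sum_{g\in \mathcal{G}(x,y_{C})} g^{-1}\otimes g, & \text{if } y=y_{C} \text{ and } x\in C,\\ 0, & \text{otherwise.}\end{cases}
\end{equation*}
Property (a) of Theorem \ref{pr:caracterizare}(5) is immediate, and (b) holds because only $y=y_{[x]}$ contributes for each $x$. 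For (c), the identities $g^{-1}\circ g=1_{x}$ and $|\mathcal{G}(x,y_{C})|=|\mathrm{Aut}_{\mathcal{G}}(x)|$ give $\mathrm{comp}(a_{x}^{y_{C}})=1_{x}$. For (d), given $f\in \mathcal{G}(x,z)$, the bijection $g\mapsto g\circ f^{-1}$ between $\mathcal{G}(x,y_{C})$ and $\mathcal{G}(z,y_{C})$, together with $|\mathrm{Aut}_{\mathcal{G}}(x)|=|\mathrm{Aut}_{\mathcal{G}}(z)|$ within a single connected component, produces $f\rhd a_{x}^{y_{C}}=a_{z}^{y_{C}}\lhd f$.

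For the implication $(\Rightarrow)$, the finiteness of each $\mathcal{G}(x,y)$ is immediate from the preceding theorem on local finite dimensionality, since $\mathcal{G}(x,y)$ is a $K$-basis of $K\left[\mathcal{G}\right](x,y)$. To obtain invertibility, take a family $(a_{x}^{y})$ from Theorem \ref{pr:caracterizare}(5) and expand
\begin{equation*}
a_{x}^{y}=\sum_{f\in \mathcal{G}(y,x),\, g\in \mathcal{G}(x,y)} \lambda^{x,y}_{f,g}\, f\otimes g.
\end{equation*}
Specializing property (d) to $z=x$ and $f=k\in \mathrm{Aut}_{\mathcal{G}}(x)$ yields $\lambda^{x,y}_{k\circ f,g}=\lambda^{x,y}_{f,g\circ k}$; setting $g=f^{-1}$ gives $\lambda^{x,y}_{k\circ f,(k\circ f)^{-1}}=\lambda^{x,y}_{f,f^{-1}}$. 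Because $\mathrm{Aut}_{\mathcal{G}}(x)$ acts transitively on $\mathcal{G}(y,x)$ by left composition (when the latter is non-empty), the scalar $c^{x,y}:=\lambda^{x,y}_{f,f^{-1}}$ does not depend on $f$. The coefficient of $1_{x}$ in $\mathrm{comp}(a_{x}^{y})$ then equals $|\mathcal{G}(y,x)|\cdot c^{x,y}$, and summing over $y$ via (c) gives
\begin{equation*}
1=\sum_{y\in \mathcal{G}_{0}}|\mathcal{G}(y,x)|\cdot c^{x,y}=|\mathrm{Aut}_{\mathcal{G}}(x)|\cdot \sum_{y\in [x]} c^{x,y},
\end{equation*}
where $[x]$ denotes the connected component of $x$ and the right-hand sum is finite by (b). This exhibits $|\mathrm{Aut}_{\mathcal{G}}(x)|$, and therefore $|\mathcal{G}(x,y)|$ for every non-empty hom-set, as a unit of $K$.

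The main obstacle I expect is the coefficient-counting step in $(\Rightarrow)$: translating property (d) into the $\mathrm{Aut}_{\mathcal{G}}(x)$-invariance of $\lambda^{x,y}_{f,f^{-1}}$ and then isolating the coefficient of $1_{x}$ in $\mathrm{comp}(a_{x}^{y})$ via the diagonal pairing $(f,f^{-1})$. The $(\Leftarrow)$ direction is a routine verification once the formula is in place, with the only subtlety being the choice of a single representative $y_{C}$ per connected component, which ensures the finite-support condition (b) without assuming $\mathcal{G}_{0}$ itself is finite.
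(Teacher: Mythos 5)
Your proof is correct, and although it rests on the same pillar as the paper's --- the Casimir-type family $(a_x^y)$ of Theorem \ref{pr:caracterizare}(5) --- the actual constructions differ in a way worth recording. In the converse direction the paper takes $a_x^y=\frac{1}{|\mathcal{G}(x,y)|}\sum_{g\in\mathcal{G}(y,x)}g\otimes g^{-1}$ for \emph{every} pair $(x,y)$; this family is nonzero for every $y$ in the connected component of $x$, so it violates the finite-support condition (b) when that component has infinitely many objects, and $\sum_y\mathrm{comp}(a_x^y)$ comes out to $N\cdot 1_x$ with $N$ the number of objects in the component, violating (c) whenever $N>1$. Your device of concentrating all the weight at a single chosen representative $y_C$ of each component repairs both defects, and your verification of (d) via the bijection $g\mapsto g\circ f^{-1}$ together with $|\mathrm{Aut}_{\mathcal{G}}(x)|=|\mathrm{Aut}_{\mathcal{G}}(z)|$ within a component is sound. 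In the forward direction the two arguments are essentially the same coefficient count: the paper reads off $\sum_{g\in\mathcal{G}(y,x)}\alpha_{g,g^{-1}}=1$ for a fixed $y$, which literally follows from (c) only when a single $y$ contributes to the coefficient of $1_x$, whereas your summation $1=\sum_y|\mathcal{G}(y,x)|\,c^{x,y}=|\mathrm{Aut}_{\mathcal{G}}(x)|\sum_{y\in[x]}c^{x,y}$ handles the general case and still extracts the invertibility of $|\mathrm{Aut}_{\mathcal{G}}(x)|$, hence of every non-empty hom-set's cardinality. One cosmetic point both proofs share: the statement must be read as requiring invertibility only for non-empty hom-sets (or for connected $\mathcal{G}$), since $|\mathcal{G}(x,y)|=0$ whenever $x$ and $y$ lie in different components.
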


\begin{proof}
Let us first assume that $K\left[ \mathcal{G}\right] $ is separable. Since
any separable linear category is locally finite it follows that $\mathcal{G}%
(x,y)$ is a finite set, for any $x,y\in \mathcal{G}_{0}.$ Let $\left(
a_{x}^{y}\right) _{x,y\in \mathcal{G}_{0}}\ $be a family $\ $which satisfies
relations (a)-(d) in Theorem \ref{pr:caracterizare}.(5). We fix $x$ and $y\ $%
in $\mathcal{G}_{0}.$ Hence $a_{x}^{y}\in K\left[ \mathcal{G}\right]
(y,x)\otimes K\left[ \mathcal{G}\right] (x,y).$ Note that $\left\{ g\otimes
h\mid g\in \mathcal{G}(y,x)\text{ and }h\in \mathcal{G}(x,y)\right\} $ is a
basis on $K\left[ \mathcal{G}\right] (y,x)\otimes K\left[ \mathcal{G}\right]
(x,y).$ Thus%
\begin{equation*}
a_{x}^{y}=\sum_{\substack{ g\in \mathcal{G}(y,x)  \\ h\in \mathcal{G}(x,y)}}%
\alpha _{g,h}g\otimes h,
\end{equation*}%
where $\alpha _{g,h}$ is a certain element in $K,$ for every $g\in \mathcal{G%
}(y,x)$ and $h\in \mathcal{G}(x,y).$ Taking into account that $\left(
a_{x}^{y}\right) _{x,y\in \mathcal{G}_{0}}$ satisfies (c), it follows easily
that%
\begin{equation}
\sum_{g\in \mathcal{G}(y,x)}\alpha _{g,g^{-1}}=1.  \label{eq:agg}
\end{equation}%
On the other hand, since $\left( a_{x}^{y}\right) _{x,y\in \mathcal{G}_{0}}$
satisfies (d), for every $f\in \mathcal{G}(x,x)$ we have $f\rhd
a_{x}^{y}=a_{x}^{y}\lhd f.$ $\ $It follows%
\begin{equation*}
\sum_{\substack{ g\in \mathcal{G}(y,x)  \\ h\in \mathcal{G}(x,y)}}\alpha
_{g,h}\left( f\circ g\right) \otimes h=\sum_{\substack{ g\in \mathcal{G}%
(y,x)  \\ h\in \mathcal{G}(x,y)}}\alpha _{g,h}g\otimes \left( h\circ
f\right) .
\end{equation*}%
Hence $\alpha _{f^{-1}\circ u,v}=\alpha _{u,v\circ f^{-1}},$ for any $u\in
\mathcal{G}(y,x)\ $and $v\in \mathcal{G}(x,y).$ We fix $g_{0}\in \mathcal{G}%
(y,x).$ Thus, by taking $f:=u_{0}\circ u^{-1}$ in the above identity, we get%
\begin{equation*}
\alpha _{u,u^{-1}}=\alpha _{(u_{0}\circ u^{-1})^{-1}\circ
u_{0},u^{-1}}=\alpha _{u_{0},u^{-1}\circ (u_{0}\circ u^{-1})^{-1}}=\alpha
_{u_{0},u_{0}^{-1}}.
\end{equation*}%
In conclusion the element $\alpha _{u,u^{-1}}$ does not deppend on $u\in
\mathcal{G}(y,x).$ By (\ref{eq:agg}), we get $\left\vert \mathcal{G}%
(x,y)\right\vert \alpha _{u_{0},u_{0}^{-1}}=1,$ so $\left\vert \mathcal{G}%
(x,y)\right\vert $ is invertible in $K.$

Conversely, let us assume that $\left\vert \mathcal{G}(x,y)\right\vert
<\infty $ and that $\left\vert \mathcal{G}(x,y)\right\vert $ is invertible
in $K.$ It is easy to that the elements%
\begin{equation*}
a_{x}^{y}:=\frac{1}{\left\vert \mathcal{G}(x,y)\right\vert }\sum_{g\in
\mathcal{G}(y,x)}g\otimes g^{-1}
\end{equation*}%
define a family which satisfies the properties (a)-(d) in Theorem \ref%
{pr:caracterizare}.(5), so $K\left[ \mathcal{G}\right] $ is separable.
\end{proof}

Recall that a category $\mathcal{A}$ is said to be skeletal if its only
isomorphisms are automorphisms. A skeletal category $\mathcal{A}$ is called
a delta category if the only endomorphisms in $\mathcal{A}$ are the
identities, cf. \cite[p. 83]{M2}. If $\mathcal{A}$ is a delta category then
there is a partial order relation $\leq $ on $\mathcal{A}_{0}$ such that $%
\mathcal{A}(x,y)\neq \varnothing $ if, and only if, $x\leq y.$ Note that any
poset (regarded as a category) is a delta category. Discrete categories are,
of course, examples of posets (with respect to the trivial order relation).

\begin{corollary}
Let $\mathcal{A}$ be a delta category. Then $K\left[ \mathcal{A}\right] $ is
separable if, and only if $\mathcal{A}$ is a discrete category.
\end{corollary}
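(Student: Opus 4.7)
The plan is to prove both implications using the characterization of separability from Theorem \ref{pr:caracterizare}.(5), exploiting how the antisymmetry of the partial order $\leq$ on $\mathcal{A}_{0}$ collapses the space in which the separability datum must live.

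For the easy direction, I would assume $\mathcal{A}$ is discrete, so $K[\mathcal{A}](x,y)=0$ for $x\neq y$ and $K[\mathcal{A}](x,x)=K\cdot 1_{x}$. Setting
\[
a_{x}^{x}:=1_{x}\otimes 1_{x}\quad\text{and}\quad a_{x}^{y}:=0\text{ for }y\neq x,
\]
conditions (a)--(d) of Theorem \ref{pr:caracterizare}.(5) are verified by inspection: (a) and (b) are immediate; $\mathrm{comp}(1_{x}\otimes 1_{x})=1_{x}$ gives (c); and (d) is vacuous unless $f\in K[\mathcal{A}](x,x)$, in which case $f=\lambda 1_{x}$ and both sides equal $\lambda(1_{x}\otimes 1_{x})$. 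Hence $K[\mathcal{A}]$ is separable.

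For the converse, I would start with a family $(a_{x}^{y})_{x,y\in\mathcal{A}_{0}}$ provided by Theorem \ref{pr:caracterizare}.(5) and observe the key rigidity: since $\mathcal{A}(y,x)\neq\emptyset$ forces $y\leq x$ and $\mathcal{A}(x,y)\neq\emptyset$ forces $x\leq y$, antisymmetry of $\leq$ implies that the space $K[\mathcal{A}](y,x)\otimes K[\mathcal{A}](x,y)$ is trivial whenever $y\neq x$. Therefore $a_{x}^{y}=0$ for all $y\neq x$. Because $\mathcal{A}$ is delta, $\mathcal{A}(x,x)=\{1_{x}\}$ and so $K[\mathcal{A}](x,x)=K\cdot 1_{x}$; this forces $a_{x}^{x}=\alpha_{x}(1_{x}\otimes 1_{x})$ for some $\alpha_{x}\in K$, and condition (c) pins down $\alpha_{x}=1$, giving $a_{x}^{x}=1_{x}\otimes 1_{x}$.

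To conclude $\mathcal{A}$ is discrete, I would argue by contradiction: suppose there exist $x\neq z$ and a morphism $f\in\mathcal{A}(x,z)$. Applying condition (d) of Theorem \ref{pr:caracterizare}.(5) with $y:=z$ yields
\[
f\rhd a_{x}^{z}=a_{z}^{z}\lhd f.
\]
The left side is $0$ because $a_{x}^{z}=0$, while the right side equals $1_{z}\otimes f$ in $K[\mathcal{A}](z,z)\otimes K[\mathcal{A}](x,z)$, which is a nonzero elementary tensor since $1_{z}$ and $f$ are basis vectors of their respective spaces. This contradiction shows no non-identity morphism can exist between distinct objects, so $\mathcal{A}$ is discrete. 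The only potentially delicate point is verifying that the nontriviality of the elementary tensor $1_{z}\otimes f$ really follows from $1_{z}$ and $f$ being basis elements of free vector spaces, but this is a standard fact about tensor products of vector spaces, so no real obstacle arises.
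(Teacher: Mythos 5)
Your proof is correct and follows essentially the same strategy as the paper: both directions go through Theorem \ref{pr:caracterizare}.(5), and the converse hinges on the observation that the delta/antisymmetry property forces $K[\mathcal{A}](y,x)\otimes K[\mathcal{A}](x,y)=0$, hence $a_x^y=0$, for $x\neq y$. The only (harmless) divergence is in the endgame: you additionally pin down $a_z^z=1_z\otimes 1_z$ via condition (c) and then contradict condition (d) at $y:=z$ directly, since $0=f\rhd a_x^z$ would have to equal $a_z^z\lhd f=1_z\otimes f\neq 0$; the paper instead applies (d) at $y:=x$ to get $f\rhd a_x^y=0$ for all $y$ and then reconstructs $f=f\rhd 1_x=\sum_y\mathrm{comp}(f\rhd a_x^y)=0$ using (c). Both arguments are valid, and yours is marginally more self-contained at the final step, at the cost of the extra (easy) computation of $a_z^z$.
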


\begin{proof}
Clearly, for a discrete category $\mathcal{A},$ the $K$-liniarization $K%
\left[ \mathcal{A}\right] $ is separable. Indeed, the elements%
\begin{equation*}
a_{x}^{y}:=\left\{
\begin{array}{cc}
0, & x\neq y; \\
1_{x}\otimes 1_{x}, & x=y;%
\end{array}%
\right.
\end{equation*}%
define a family $(a_{x}^{y})_{x,y\in }\mathcal{A}_{0}$ which satisfies the
properties (a)-(d) in Theorem \ref{pr:caracterizare}.(5). Conversely, let as
assume that $K\left[ \mathcal{A}\right] $ is separable. We have to prove
that $\mathcal{A}(x,z)=\varnothing ,$ for any $x<z.$ Let $%
(a_{x}^{y})_{x,y\in }\mathcal{A}_{0}$ which satisfies the properties
(a)-(d). Since $\mathcal{A}$ is a delta category it follows that either $K%
\left[ \mathcal{A}\right] (x,y)=0$ or $K\left[ \mathcal{A}\right] (y,x)=0,$
provided that $x\neq y.$ Therefore $a_{x}^{y}=0,$ for any $x$ and $y$ such
that $x\neq y.$ Let us suppose that there is $f:x\rightarrow z$ in $K\left[
\mathcal{A}\right] ,$ with $x<z.$ Since $f\rhd a_{x}^{x}=a_{z}^{x}\lhd
f=0\lhd f$, we deduce that $f\rhd a_{x}^{y}=0$ for any $x,y\in \mathcal{A}%
_{0}.$ Hence%
\begin{equation*}
f=f\rhd 1_{x}=\sum_{y\in \mathcal{A}_{0}}\mathrm{comp}(f\rhd a_{x}^{y})=0.
\end{equation*}%
It follows that $K\left[ \mathcal{A}\right] (x,z)=0,$ for all $x<z.$
Therefore $\mathcal{A}(x,z)=\varnothing .$
\end{proof}

\begin{remark}
For a different proof of the above corollary see \cite[Proposition 33.1]{M2}.
\end{remark}

\begin{proposition}
If $\mathcal{C}$ is separable then any left $\mathcal{C}$-module $M$ is
projective.
\end{proposition}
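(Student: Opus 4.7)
The plan is to mimic the classical argument that a separable algebra has every left module projective: from an arbitrary $K$-linear section of a given surjection, build a $\mathcal{C}$-linear one by ``averaging'' against the family $(a_x^y)_{x,y\in\mathcal{C}_0}$ furnished by Theorem~\ref{pr:caracterizare}(5).

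First I would take an epimorphism $\pi\colon N\to M$ in $\mathcal{C}$-$\mathrm{Mod}$. Since each component ${}_y\pi\colon{}_yN\to{}_yM$ is a surjective $K$-linear map, I can choose $K$-linear (not necessarily $\mathcal{C}$-linear) sections ${}_ys\colon{}_yM\to{}_yN$. Writing $a_x^y=\sum_{i=1}^{n_{x,y}}f_{y,x}^i\otimes g_{x,y}^i$ as in the theorem, I would define, for $m\in{}_xM$,
\[
\sigma(m):=\sum_{y\in\mathcal{C}_0}\sum_{i=1}^{n_{x,y}}g_{x,y}^i\rhd{}_ys(f_{y,x}^i\rhd m)\in{}_xN.
\]
Property~(b) makes the outer sum finite and the types trace through as $f_{y,x}^i\rhd m\in{}_yM$, ${}_ys(\cdot)\in{}_yN$, $g_{x,y}^i\rhd(\cdot)\in{}_xN$, so $\sigma$ is a well-defined $K$-linear map. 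The identity $\pi\circ\sigma=\mathrm{id}_M$ falls out immediately: $\mathcal{C}$-linearity of $\pi$ combined with ${}_y\pi\circ{}_ys=\mathrm{id}$ yields $\pi\sigma(m)=\sum_{y,i}g_{x,y}^i\rhd(f_{y,x}^i\rhd m)$, the module associativity collapses each summand to $(f_{y,x}^i\circ g_{x,y}^i)\rhd m$, and property~(c) together with $1_x\rhd m=m$ finishes this step.

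The hard part is $\mathcal{C}$-linearity of $\sigma$. For $h\in\mathcal{C}(z,x)$ and $m\in{}_xM$, I would apply the module axiom once inside the argument of $s$ and once outside it to obtain
\[
\sigma(h\rhd m)=\sum_{y,i}g_{z,y}^i\rhd{}_ys\bigl((h\circ f_{y,z}^i)\rhd m\bigr),\qquad h\rhd\sigma(m)=\sum_{y,i}(g_{x,y}^i\circ h)\rhd{}_ys(f_{y,x}^i\rhd m).
\]
I would then introduce the $K$-bilinear auxiliary map
\[
\tau_y\colon\mathcal{C}(y,x)\otimes\mathcal{C}(z,y)\longrightarrow{}_zN,\qquad\tau_y(\alpha\otimes\beta):=\beta\rhd{}_ys(\alpha\rhd m),
\]
under which the $y$-summands above read exactly $\tau_y(h\rhd a_z^y)$ and $\tau_y(a_x^y\lhd h)$ respectively. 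Property~(d), applied with the theorem's ``$f\in\mathcal{C}(x,z)$'' relabelled so that our $h\in\mathcal{C}(z,x)$ plays the role of $f$, gives $h\rhd a_z^y=a_x^y\lhd h$. Summing over $y$ then forces $\sigma(h\rhd m)=h\rhd\sigma(m)$, exhibiting $\sigma$ as the desired $\mathcal{C}$-linear section of $\pi$ and proving $M$ is projective.

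The main obstacle is precisely this last step; the trick is to recognise both sides as images under a common bilinear map $\tau_y$ of the two elements equated by property~(d), at which point property~(d) itself closes the argument.
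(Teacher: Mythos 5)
Your proof is correct, and it certifies projectivity by a genuinely different (though closely related) route. The paper exhibits $M$ as a retract of $\mathcal{C}\otimes M$: it writes down an explicit $\mathcal{C}$-linear section $\psi$ of the action map $\mathcal{C}\otimes M\to M$ built from the family $(a_x^y)$, which tacitly relies on $\mathcal{C}\otimes M$ being a projective left module (a left-module analogue of the Lemma on $\mathcal{C}\otimes\mathcal{C}$ that the paper never states). You instead split an arbitrary epimorphism $\pi\colon N\to M$ directly, by averaging a componentwise $K$-linear section against $(a_x^y)$; this is the Maschke-style argument, and it is self-contained in that it uses only the characterization of projectives via split epimorphisms (plus the fact that epimorphisms in the functor category $\mathcal{C}$-$\mathrm{Mod}$ are componentwise surjective, which holds since kernels and cokernels are computed pointwise). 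The delicate step — deducing $\sigma(h\rhd m)=h\rhd\sigma(m)$ from property (d) — is handled correctly: since $\tau_y$ is induced by a bilinear map, it is well defined on the tensor product, so the equality of tensors $h\rhd a_z^y=a_x^y\lhd h$ really does force the equality of the two sums independently of the chosen representatives $\sum_i f\otimes g$. One cosmetic remark: you consistently use the preliminaries' convention $\rhd\colon\mathcal{C}(y,x)\otimes{}_xM\to{}_yM$, whereas the paper's own proof of this proposition silently switches to the opposite convention $\mathcal{C}(y,x)\otimes{}_yM\to{}_xM$; under either consistent reading your formulas typecheck and the argument goes through, so this is a discrepancy in the paper rather than a gap in your proof.
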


\begin{proof}
Since $\mathcal{C}$ is separable there is a family $(a_{x}^{y})_{x,y\in
\mathcal{C}_{0}}$ as in Theorem \ref{pr:caracterizare}.(5). It is sufficient
to prove that the canonical morphism of left $\mathcal{C}$-modules $\varphi :%
\mathcal{C}\otimes M\rightarrow M$ has a section in $\mathcal{C}$-$\mathrm{%
Mod}$. Let
\begin{equation*}
\varphi _{x}:\bigoplus_{y\in \mathcal{C}_{0}}\mathcal{C}(y,x)\otimes
{}_{y}M\rightarrow {}_{x}M
\end{equation*}%
be the corresponding component of degree $x$. We define
\begin{equation*}
\psi _{x}^{y}:{}_{x}M\rightarrow \mathcal{C}(y,x)\otimes {}_{y}M,\qquad \psi
_{x}^{y}(m)=\sum_{i=1}^{n_{x,y}}f_{y,x}^{i}\otimes \left( g_{x,y}^{i}\rhd
m\right) ,
\end{equation*}%
where the elements $f_{y,x}^{i}$ and $g_{x,y}^{i}$ define $a_{x}^{y}\in
\mathcal{C}(y,x)\otimes \mathcal{C}(x,y)$ as in relation (\ref{eq:fxy}).
Since the family $(a_{x}^{y})_{y\in \mathcal{C}_{0}}\ $is of finite support,
it follows that the family $(\psi _{x}^{y}(m))_{y\in \mathcal{C}_{0}}$ is
also of finite support. Thus it makes sense to define $\psi
_{x}:{}_{x}M\rightarrow \bigoplus_{y\in \mathcal{C}_{0}}\left( \mathcal{C}%
(y,x)\otimes {}_{y}M\right) $ by $\psi _{x}(m)=(\psi _{x}^{y}(m))_{y\in
\mathcal{C}_{0}}.$ For $m\in {}_{x}M$ we get
\begin{eqnarray*}
(\varphi _{x}\circ \psi _{x})(m) &=&\sum_{y\in \mathcal{C}%
_{0}}\sum_{i=1}^{n_{x,y}}f_{y,x}^{i}\rhd (g_{x,y}^{i}\rhd m)=\sum_{y\in
\mathcal{C}_{0}}\sum_{i=1}^{n_{x,y}}(f_{y,x}^{i}\circ g_{x,y}^{i})\rhd m \\
&=&\left( \sum_{y\in \mathcal{C}_{0}}\mathrm{comp}(a_{x}^{y})\right) \rhd
m=1_{x}\rhd m=m.
\end{eqnarray*}%
Note that the first equality follows by the definition of the maps $\varphi
_{x}$ and $\psi _{x}$. The second and the last equalities are consequences
of the definition of $\mathcal{C}$-modules, while for the third relation we
used the fact that the family $(a_{x}^{y})_{x,y\in \mathcal{C}_{0}}$
satisfies property (c) in Theorem \ref{pr:caracterizare}.(5). Summarizing,
we have proved that $\psi :=(\psi _{x})_{x\in \mathcal{C}_{0}}$ is a section
of $\varphi .$ It remains to show that $\psi $ is a morphism of left $%
\mathcal{C}$-modules. Let $f\in \mathcal{C}(z,x)$ and $m\in {}_{z}M,$ where $%
x$ and $z$ are given objects in $\mathcal{C}_{0}.$ We have
\begin{eqnarray*}
\psi _{x}(f\rhd m) &=&(\psi _{x}^{y}(f\rhd m))_{y\in \mathcal{C}_{0}}=\left(
\sum_{i=1}^{n_{x,y}}f_{y,x}^{i}\otimes \left[ (g_{x,y}^{i}\circ f)\rhd m%
\right] \right) _{y\in \mathcal{C}_{0}} \\
&=&\left( \sum_{j=1}^{n_{y,z}}(f\circ f_{y,z}^{j})\otimes \left(
g_{z,y}^{j}\rhd m\right) \right) _{y\in \mathcal{C}_{0}}=\left( f\rhd
\sum_{j=1}^{n_{y,z}}f_{y,z}^{j}\otimes \left( g_{z,y}^{j}\rhd m\right)
\right) _{y\in \mathcal{C}_{0}}\hspace*{\fill} \\
&=&\left( f\rhd \psi _{z}^{y}(m)\right) _{y\in \mathcal{C}_{0}}=f\rhd \left(
\psi _{z}^{y}(m)\right) _{y\in \mathcal{C}_{0}}=f\rhd \psi _{z}(m).
\end{eqnarray*}%
The first and the second identities follow by the definition of $\psi _{x}$
and $\psi _{x}^{y},$ respectively. For the third equality one uses property
(d) in Theorem \ref{pr:caracterizare}.(5), while the fourth one is obtained
from the definition of the action on $\mathcal{C}\otimes M$. Finally, the
fifth and the last relations are consequences of the definition of $\psi
_{z}^{y}$ and $\psi _{z}$, while for the sixth identity one uses the
definition of the $\mathcal{C}$-module structure on $M:=\bigoplus_{y\in
\mathcal{C}_{0}}{}_{y}M.$
\end{proof}


\begin{thebibliography}{AM\c{S}}
\bibitem[AM\c{S}]{St} Ardizzoni, Alessandro; Menini, Claudia; \c{S}tefan,
Drago\c{s}. \emph{Hochschild cohomology and \textquotedblleft
smoothness\textquotedblright\ in monoidal categories}. J. Pure Appl. Algebra
\textbf{208}, No. 1, 297-330 (2007).

\bibitem[M1]{M1} Mitchell, Barry. \emph{Rings with several objects}. Adv. in
Math. \textbf{8}, 1-161 (1972).

\bibitem[M2]{M2} Mitchell, Barry, \emph{Theory of categories}. Academic
Press Inc., 1965.

\bibitem[McC]{McC} McCarthy, Richard. \emph{The cyclic homology of an exact
category.} J. of Pure and App. Alg. \textbf{93}, No. 3, 251-296 (1994).

\bibitem[HS]{HS} Herscovich, Estanislao; Solotar, Andrea. \emph{%
Hochschild-Mitchell cohomology and Galois extensions.} J. Pure Appl. Algebra
\textbf{209}, No. 1, 37-55 (2007).

\bibitem[We]{We} Weibel, Charles A. \emph{An introduction to homological
algebra}. Cambridge Studies in Advanced Mathematics. \textbf{38}. Cambridge
University Press (1995).\medskip
\end{thebibliography}
\end{document}